\newtheorem{theorem}{Theorem}[section]
\newtheorem{proposition}[theorem]{Proposition}
\newtheorem{lemma}[theorem]{Lemma}
\theoremstyle{remark}
\theoremstyle{definition}
\newtheorem{remark}[theorem]{Remark}
\newtheorem{definition}[theorem]{Definition}
\numberwithin{equation}{section}
\newcommand{\be} {\begin{equation}}
\newcommand{\ee} {\end{equation}}
\newcommand{\bea} {\begin{eqnarray}}
\newcommand{\eea} {\end{eqnarray}}
\newcommand{\Bea} {\begin{eqnarray*}}
\newcommand{\Eea} {\end{eqnarray*}}
\newcommand{\la} {\lambda}
\newcommand{\La} {\Lambda}
\newcommand{\Om} {\Omega}
\newcommand{\Mp}{\mathcal M^+}
\newcommand{\Mm}{\mathcal M^-}
\newcommand{\Rn}{\mathbb{ R}^{n}}
\def\XXint#1#2#3{{\setbox0=\hbox{$#1{#2#3}{\int}$}
    \vcenter{\hbox{$#2#3$}}\kern-.5\wd0}}
\title{Overdetermined problems for the normalized $p$-Laplacian}
\author{
 {\sc  Agnid Banerjee}   \and  {\sc Bernd Kawohl} 
 }
\date{\today}
\begin{document}

\maketitle

\begin{abstract}
We extend the symmetry result of Serrin \cite{S} and Weinberger \cite{W} from the Laplacian operator to the highly degenerate game-theoretic $p$-Laplacian operator and show that viscosity solutions of $-\Delta_p^Nu=1$ in $\Omega$, $u=0$ and $\tfrac{\partial u}{\partial\nu}=-c\neq 0$ on $\partial\Omega$ can only exist on a bounded domain $\Omega$ if $\Omega$ is a ball.

\end{abstract}

\bigskip

{\bf\small Mathematics Subject Classification (2010).} 
{\small 35N25, 36J62, 35D40}

\bigskip\noindent
{\bf\small Keywords.} {\small }
 overdetermined boundary value problem, game-theoretic $p$--Laplacian, viscosity solution
 
\section{Introduction}\label{introduction}
In a seminal paper \cite{S}  Serrin showed that the following overdetermined boundary problem can only have a solution $u\in C^2(\overline\Omega)$ if $\Omega$ is a ball.
\begin{equation}\label{linear}
-\Delta u=1\quad\hbox{ in }\Omega,\qquad\qquad u=0\quad\hbox{ and }\quad\frac{\partial u}{\partial \nu}=c<0\hbox{ on }\partial\Omega
\end{equation}
Here $c$ is constant and $\Omega\subset{\mathbb R}^n$ is a bounded connected domain with boundary of class $C^{2}$. Serrin used Alexandrov's moving plane method for his proof, while Weinberger \cite{W} found a proof using Rellich's identity and the fact that a related function $P(x)=|\nabla u|^2+\tfrac{2}{n}u$ is constant in $\Omega$. Only the second method of proof has been adapted to a situation where the Laplacian operator is replaced by the $p$-Laplacian in \cite{GL} and \cite{K0}.

\smallskip
In this paper we treat the case that the Laplacian is replaced by the normalized or game-theoretic $p$-Laplacian $\Delta_p^N$ which is defined for any $p\in(1,\infty)$ by
\begin{equation}\label{normalizeddef}
\Delta_p^Nu:=\frac{1}{p}|\nabla u|^{2-p}{\rm div}\left( |\nabla u|^{p-2}\nabla u\right)=\frac{1}{p}\Delta_1^Nu+\frac{p-1}{p}\Delta_\infty^Nu,
\end{equation}
a convex combination of the limiting operators
\begin{equation}
\Delta_1^Nu:=|\nabla u|{\rm  div}\left( \frac{\nabla u}{|\nabla u|}\right)\quad\hbox{ and }\quad\Delta_\infty^Nu:=\frac{\sum_{i,j=1}^nu_{x_i}u_{x_i x_j}u_{x_j}}{|\nabla u|^2}.
\end{equation}
Note that this operator is not in divergence form. Therefore it resists attempts to treat it with variational methods. On the other hand it is quite benign, because its coefficient matrix is bounded from below by $\min\{\tfrac{1}{p},\tfrac{p-1}{p}\}I$ and from above by $\max\{\tfrac{1}{p},\tfrac{p-1}{p}\}I$. Therefore the moving plane method seems more appropriate in this context.

Note also, that the above definition of the normalized $p$-Laplacian needs further explanation when $\nabla u=0$. The definition  of and a weak comparison principle for continuous viscosity solutions are given below. These and an existence and uniqueness result can be found for instance in \cite{LW} or \cite{Ku}. Our main result answers an open problem from \cite{K1}. 

\begin{theorem} \label{Main}
For $p\in(1,\infty)$ let $u\in C(\overline\Omega)$ be a viscosity solution to the overdetermined boundary value problem
\begin{equation}
-\Delta_p^N u=1\quad\hbox{ in }\Omega,\qquad\qquad u=0\quad\hbox{ and }\quad\frac{\partial u}{\partial \nu}=c<0\hbox{ on }\partial\Omega
\end{equation}
on a connected bounded domain $\Omega$ with with boundary of class $C^{2}$. Then $\Omega$ must be a ball.
\end{theorem}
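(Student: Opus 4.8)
The strategy I would follow is the moving-plane (Alexandrov) method adapted to viscosity solutions, as the authors themselves hint is natural here because the coefficient matrix of $\Delta_p^N$ is uniformly elliptic (pinched between $\min\{\tfrac1p,\tfrac{p-1}{p}\}I$ and $\max\{\tfrac1p,\tfrac{p-1}{p}\}I$).

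Key steps:
1. Establish regularity: $u$ is $C^{1,\alpha}$ in $\Omega$ (and up to the boundary where $\nabla u\neq 0$), $u>0$ in $\Omega$ by the strong maximum principle for viscosity supersolutions.
2. The overdetermined condition forces the Hopf-type lemma to be an equality — i.e. $|\nabla u| = -c$ on all of $\partial\Omega$.
3. Set up the moving planes in an arbitrary direction $e$; let $T_\lambda$ be the hyperplane $\{x\cdot e=\lambda\}$ and $u_\lambda$ the reflection of $u$ across $T_\lambda$. Because $\Delta_p^N$ is invariant under reflections, $u_\lambda$ solves the same PDE on the reflected cap.
4. The main engine is the comparison principle for viscosity solutions (available from the cited references LW/Ku). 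One needs a comparison that works for $u$ and $u_\lambda$ — where the operator is fully nonlinear but degenerate only at critical points. Here is where the authors must do real work, because at points where $\nabla u = 0$ the $\Delta_\infty^N$ part is singular and the standard Alexandrov argument needs care (one typically needs a comparison principle valid up to such points, or to show the set $\{\nabla u = 0\}$ is negligible/handled via a sup-convolution regularization).

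$\textbf{Main obstacle.}$ The hard part is the moving-plane step when the normal derivative is involved at $T_\lambda$ and especially when $\nabla u$ may vanish: the Hopf lemma and the strong comparison principle for this non-divergence, degenerate operator have to be proved (or cited carefully) in a form robust enough to start the planes, continue them, and conclude equality at the critical $\lambda$. One then slides the plane from outside until either it becomes internally tangent to $\partial\Omega$ or the normal derivatives match; in the second case the boundary point gives, via the boundary point lemma applied to $w=u-u_\lambda$ (which satisfies a linear uniformly elliptic equation with bounded coefficients by the mean value/Bellman structure), that $w\equiv 0$, hence $\Omega$ is symmetric about $T_\lambda$. Since $e$ was arbitrary, $\Omega$ is a ball.

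I would expect the write-up to spend most of its effort on (a) proving a strong comparison/Hopf lemma for $\Delta_p^N$ that is valid even across the critical set $\{\nabla u=0\}$, and (b) justifying the linearization of $u-u_\lambda$ into a uniformly elliptic linear inequality with bounded measurable coefficients so that the classical Serrin corner lemma applies at the tangency point.
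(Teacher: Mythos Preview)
Your overall strategy---moving planes plus comparison---matches the paper's, and you correctly identify that the linearization of $w=u-u_\lambda$ into a uniformly elliptic equation is what lets the classical Hopf and Serrin corner lemmas fire. But you mislocate the main obstacle, and in doing so you miss the idea that actually makes the proof go through.

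You anticipate that most of the work will be in proving a strong comparison/Hopf lemma for $\Delta_p^N$ ``valid even across the critical set $\{\nabla u=0\}$.'' The paper does \emph{not} do this, and it is the crucial point. Following Alessandrini--Garofalo, the paper first uses boundary $C^{1,\beta}$ regularity (proved in the Appendix via Pucci bounds and the Silvestre--Sirakov boundary estimate) together with the Neumann datum $|\nabla u|=-c>0$ on $\partial\Omega$ to produce a collar $S_\varepsilon=\{x\in\overline\Omega:\ d(x,\partial\Omega)<\varepsilon\}$ on which $|\nabla u|>0$. In $S_\varepsilon$ the operator is genuinely uniformly elliptic with $C^\beta$ coefficients, so Schauder gives $u\in C^{2,\beta}_{\rm loc}(S_\varepsilon)$, and the linearized equation for $w$ has bounded coefficients there. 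The \emph{weak} comparison principle (Proposition~\ref{comp}, taken directly from \cite{LW,Ku}) is all that is used in the full reflected cap, including across critical points; the \emph{strong} maximum principle, Hopf lemma, and Serrin corner lemma are applied only inside $S_\varepsilon$, where they are classical. No strong comparison across $\{\nabla u=0\}$ is ever needed.

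Two smaller points. First, your description of the stopping criteria (``internally tangent'' or ``the normal derivatives match'') is not quite right: the second critical position is when $T_\lambda$ meets $\partial\Omega$ orthogonally at a point $P$, and this case requires Serrin's corner lemma together with a careful verification that all first and second directional derivatives of $w$ vanish at $P$ (the paper checks $u_{\nu\nu}$, $u_{\nu\tau}$, $u_{\tau\tau}$ explicitly using the PDE rewritten in terms of mean curvature). Second, your plan does not mention how local symmetry in $S_\varepsilon$ upgrades to global symmetry of $\partial\Omega$; the paper closes this with an open-and-closed argument on $\partial\Omega$, reapplying the corner lemma at boundary points of the coincidence set.
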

\begin{remark}
We note that the Neumann condition $ \ \frac{\partial u}{\partial \nu}=c<0\hbox{ on }\partial\Omega$ is interpreted  in the following sense: Any $C^{2}$ function  $\varphi$ such that $\varphi-u$ has a minimum at a point $x \in \partial \Omega$ satisfies $\ \frac{\partial \varphi}{\partial \nu}(x) \leq c$ at $x$. Similarly,  any $C^{2}$ function $\psi$ such that $\psi-u$ has a maximum at  a  point $y \in \partial \Omega$ satisfies $\ \frac{\partial \psi}{\partial \nu}(y) \geq c$.\end{remark}

\begin{remark}
It was pointed out in  \cite{K1} that Theorem \ref{Main} remains true for $p=1$, while for $p=\infty$ it is generally false.

In fact for $p=1$ the equation can be rewritten as $-\tfrac{n-1}{p}H(x) u_\nu(x)=$, where $H(x)$ denotes mean curvature of the level set passing through $x$, and in view of the constant Neumann data this means that $\partial\Omega$ has constant mean curvature. Therefore $\Omega$ is a ball of radius $\tfrac{1-n}{p}c$. 

As explained in \cite{BK}, for $p=\infty$ the right $P$-function is $|\nabla u|^2+2u$, and annuli are cases in which the overdetermined problem has viscosity solutions of class $C^1$. The case $p=\infty$ was also studied in great detail in a series of papers by Crasta and Fragal\'a, who relaxed the $C^2$ smoothness of the boundary, see e.g. \cite{CF}.

\end{remark}
\smallskip

The normalized $p$-Laplacian has also been studied in the context of evolution equations in a number of papers, see \cite{JK, D, BG1, BG2, PR, Ju, JS}.
\section{Definitions and Comparison Result}
In the notation of the theory of viscosity solutions we study the equation
\begin{equation}\label{F_p}
F_p(\nabla u,\nabla^2u)= -\frac{p-2}2 |\nabla u|^{-2}\langle \nabla^2u \nabla u, \nabla u\rangle-\frac{1}{p}{\rm trace}\nabla^2u-1=0.
\end{equation}\
\begin{definition}\label{vs}Following \cite{CIL},
$u\in C(\Omega)$ is a {\bf viscosity solution} of the equation $F(\nabla u,\nabla^2u)=0$, if it is both a viscosity subsolution and a viscosity supersolution.
\medskip

$u$ is a {\bf viscosity  subsolution} of $F(\nabla u,\nabla^2u)=0$, if for every $x\in\Omega$ and $\varphi\in C^2$ such that
$\varphi-u$ has a minimum at $x$, the inequality $F_*(\nabla \varphi,\nabla^2\varphi)\leq0$ holds. Here $F_*$ is the lower semicontinuous hull of $F$.
\medskip

$u$ is a {\bf viscosity supersolution}  of $F(\nabla u,\nabla^2u)=0$, if for every $x\in\Omega$ and $\psi\in C^2$ such that
$\psi-u$ has a maximum at $x$, the inequality $F^*(\nabla \psi,\nabla^2\psi)\geq0$ holds. Here $F^*$ is the upper semicontinuous hull of $F$.

\end{definition}
If $X$ denotes a symmetric real valued matrix, we denote its eigenvalues by $\lambda_{min}=\lambda_1\leq \lambda_2\leq\ldots\leq\lambda_n=\lambda_{max}$. Using this notation, it is   a simple exercise to find out that 
\begin{equation}
F_*(q,X)=\begin{cases}  \qquad F(q,X)\quad&\hbox{if }q\not=0,\\
\inf_{a\in{\mathbb R}^n\setminus\{ 0\}}F(a,X)\quad&\hbox{if }q=0,      
\end{cases}\end{equation}
so 
\begin{equation}
F_*(0,X)=\begin{cases}-\tfrac{p-1}{p}\lambda_1-\tfrac{1}{p}\sum_{i=2}^{n}\lambda_i-1\quad\hbox{ for }p\in(1,2],\\
-\tfrac{p-1}{p}\lambda_n-\tfrac{1}{p}\sum_{i=1}^{n-1}\lambda_i-1\quad\hbox{ for }p\in[2,\infty),\end{cases}
\end{equation}
while
\begin{equation}
F^*(q,X)=\begin{cases} \qquad F(q,X)\quad&\hbox{if }q\not=0,\\
\sup_{a\in{\mathbb R}^n\setminus\{ 0\}}F(a,X)\quad&\hbox{if }q=0,      
\end{cases}
\end{equation}
that is
\begin{equation}
F^*(0,X)=\begin{cases}-\tfrac{p-1}{p}\lambda_n-\tfrac{1}{p}\sum_{i=1}^{n-1}\lambda_i-1\quad\hbox{ for }p\in(1,2],\\
-\tfrac{p-1}{p}\lambda_1-\tfrac{1}{p}\sum_{i=2}^{n}\lambda_i-1\quad\hbox{ for }p\in[2,\infty).\end{cases}
\end{equation}

\medskip
The following comparison principle has been derived in \cite{LW,Ku}.
\begin{proposition}\label{comp}
Suppose $u$ and $v$ are in $C(\overline D) $ and are viscosity super- resp. subsolutions of $F_p=0$ on a domain $D$ and $u\geq v$ on $\partial D$. Then $u\geq v$ in $D$.
\end{proposition}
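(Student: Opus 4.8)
The plan is to prove the comparison principle by the doubling-of-variables technique of Crandall--Ishii--Lions \cite{CIL}; the two genuine difficulties are the absence of a zeroth-order term (a constant right-hand side gives no strict monotonicity in $u$) and the singularity of $F_p$ on $\{\nabla u=0\}$. I would record first the structural facts I will use: writing $\Delta_p^Nw=\mathrm{tr}\bigl(A(\nabla w)\nabla^2w\bigr)$ with $A(q)$ symmetric and $\min\{\tfrac1p,\tfrac{p-1}{p}\}I\le A(q)\le\max\{\tfrac1p,\tfrac{p-1}{p}\}I$, the operator is degenerate elliptic, positively $1$-homogeneous ($\Delta_p^N(\theta w)=\theta\,\Delta_p^Nw$ for $\theta>0$), and invariant under addition of constants. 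Using the last property, replace $u,v$ by $u+C,v+C$ with $C=\|v\|_{L^\infty(\overline D)}$: this leaves them a supersolution resp.\ subsolution, preserves $u\ge v$ on $\partial D$, and makes the subsolution nonnegative. I therefore assume from now on $v\ge0$ on $\overline D$.

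To manufacture the strict inequality that the constant right-hand side otherwise denies us, I would exploit the $1$-homogeneity: for $\theta\in(0,1)$ the function $v_\theta:=\theta v$ is a viscosity subsolution of $-\Delta_p^Nw=\theta$, i.e.\ a strict subsolution of the original equation with slack $\gamma:=1-\theta>0$, meaning $F_{p,*}(q,X)\le-\gamma$ at the relevant jets. Since $v\ge0$ and $u\ge v$ on $\partial D$, one has $v_\theta-u=\theta v-u\le(\theta-1)v\le0$ on $\partial D$. It then suffices to prove $u\ge v_\theta$ in $D$ for every such $\theta$ and to let $\theta\to1^-$ (and finally subtract the constant $C$).

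Suppose, for contradiction, that $M:=\sup_{\overline D}(v_\theta-u)>0$; since $v_\theta-u\le0$ on $\partial D$, the supremum is attained at an interior point. I would then double the variables with a \emph{quartic} penalization, maximizing $\Phi_j(x,y)=v_\theta(x)-u(y)-\tfrac{j}{4}|x-y|^4$ over $\overline D\times\overline D$, with maximizer $(x_j,y_j)$. The usual lemma gives $(x_j,y_j)\to(\hat x,\hat x)$ with $\hat x$ interior and $j|x_j-y_j|^4\to0$, so $(x_j,y_j)$ is interior for large $j$, and the theorem on sums \cite{CIL} yields symmetric matrices $X_j\le Y_j$ with $(p_j,X_j)\in\overline J^{2,+}v_\theta(x_j)$ and $(p_j,Y_j)\in\overline J^{2,-}u(y_j)$, where $p_j=j|x_j-y_j|^2(x_j-y_j)$.

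Now I distinguish two cases. If $x_j\ne y_j$ then $p_j\ne0$, where $F_p$ is continuous and degenerate elliptic; the viscosity inequalities read $F_p(p_j,X_j)\le-\gamma$ and $F_p(p_j,Y_j)\ge0$, while $X_j\le Y_j$ forces $F_p(p_j,X_j)\ge F_p(p_j,Y_j)$, so $-\gamma\ge F_p(p_j,X_j)\ge F_p(p_j,Y_j)\ge0$, a contradiction. If instead $x_j=y_j$, then the quartic penalty has vanishing Hessian on the diagonal, so the matrix inequality degenerates to $X_j\le0\le Y_j$; the supersolution inequality then gives $0\le F_p^*(0,Y_j)$, but by the explicit formula for $F^*(0,\cdot)$ in the excerpt its eigenvalue coefficients $\tfrac{p-1}{p},\tfrac1p$ are positive and $Y_j\ge0$ gives $\lambda_i(Y_j)\ge0$, whence $F_p^*(0,Y_j)\le-1<0$, again a contradiction. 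This degenerate case is the main obstacle and the reason for the quartic (rather than quadratic) doubling: only the quartic forces the sign information $X_j\le0\le Y_j$ that defeats the singularity of $F_p$ at the origin, where $X_j\le Y_j$ alone would not suffice. Having reached a contradiction in both cases, $M\le0$, so $u\ge v_\theta$ in $D$; letting $\theta\to1^-$ and undoing the constant shift gives $u\ge v$ in $D$.
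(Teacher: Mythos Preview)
Your argument is correct and self-contained. Note, however, that the paper does not actually prove this proposition: immediately before its statement the authors write that the comparison principle ``has been derived in \cite{LW,Ku}'' and give no argument of their own. So there is no proof in the paper against which to compare yours.

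That said, your route is the standard one for equations of this type and is almost certainly close in spirit to what the cited references contain. The two devices you introduce are exactly the right fixes for the two obstructions you name: scaling the subsolution by $\theta\in(0,1)$ via the positive $1$-homogeneity of $\Delta_p^N$ manufactures the strict gap $\gamma=1-\theta$ that the constant right-hand side would otherwise deny, and replacing the usual quadratic penalty by the quartic $\tfrac{j}{4}|x-y|^4$ makes the Hessian of the penalty vanish on the diagonal, which is what produces the sign information $X_j\le 0\le Y_j$ in the degenerate case $x_j=y_j$. Both branches of your dichotomy close cleanly: for $x_j\neq y_j$ the ellipticity of $F_p$ at nonzero gradient gives $F_p(p_j,X_j)\geq F_p(p_j,Y_j)$ and hence $-\gamma\geq 0$; for $x_j=y_j$ the inequality $Y_j\geq 0$ together with the paper's explicit formula for $F_p^*(0,\cdot)$ forces $F_p^*(0,Y_j)\leq -1$, contradicting the supersolution inequality.

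One small implicit assumption worth flagging: your compactness step (attainment of $M$ and of the maximizer of $\Phi_j$) needs $D$ bounded. The proposition as stated does not say so, but every application in the paper is to bounded sets (reflected caps inside $\Omega$), so this is harmless in context.
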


 It can be used to show the positivity of $u$ and a Hopf Lemma.
\begin{lemma}\label{Hopf}
Suppose $\Omega$ satisfies a uniform interior sphere condition and $u\in C(\overline{\Omega})$ is a viscosity solution of $F_p(\nabla u,\nabla^2u)=0$ in $\Omega$ such that $u=0$ on $\partial \Om$. Then $u$ is positive in $\Omega$ and there exists a number $a>0$ such that for all $y \in \partial \Om$  
$$\limsup_{t \to 0^{+}}\frac{u(y)-u(y- t \nu(y))}{t} \leq -a<0.$$
\end{lemma}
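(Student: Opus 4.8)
The plan is to prove Lemma~\ref{Hopf} in two stages: first establish positivity of $u$ in $\Omega$ by comparison, and then construct an explicit barrier on a small ball touching $\partial\Omega$ from inside to get the uniform strict sign of the one-sided boundary derivative. For positivity, note that $v\equiv 0$ is trivially a viscosity solution (hence subsolution) of $F_p=0$ only if the constant term were absent; instead one compares $u$ from below with the function that solves the torsion-type problem on a ball, or more directly one applies Proposition~\ref{comp} with $v\equiv 0$ on a subdomain: since $-\Delta_p^N u=1>0$, $u$ is a strict supersolution in the sense that it cannot attain an interior minimum equal to its boundary value $0$; the strong-maximum-principle flavour of the comparison principle then forces $u>0$ in the connected open set $\Omega$. (One must be a little careful: the clean statement is that $u$ is a supersolution of $F_p=0$ and $0$ a subsolution, with $u\ge 0=v$ on $\partial\Omega$, giving $u\ge 0$; strict positivity then follows from the Hopf-type argument below applied at any point, or from the fact that $u\equiv 0$ on an open subset would contradict $-\Delta_p^N u=1$ in the viscosity sense.)

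For the boundary estimate, fix $y\in\partial\Omega$ and let $B=B_R(z)\subset\Omega$ be an interior ball with $y\in\partial B$, with $R$ uniform by hypothesis. On the annular region $A=B_R(z)\setminus\overline{B_{R/2}(z)}$ I would build a radial barrier $w(x)=\phi(|x-z|)$ with $\phi(R)=0$, $\phi>0$ on $[R/2,R)$, chosen so that $-\Delta_p^N w\le 1$ in $A$ (i.e.\ $w$ is a subsolution) and so that $w\le \min_{\partial B_{R/2}(z)} u=:m>0$ on the inner sphere. Since both $\Delta_1^N$ and $\Delta_\infty^N$ act on a radial function $\phi(r)$ as $\Delta_\infty^N w=\phi''(r)$ and $\Delta_1^N w=\phi''(r)+\tfrac{n-1}{r}\phi'(r)$ (when $\phi'<0$), the normalized $p$-Laplacian becomes the ODE operator $\phi''(r)+\tfrac{n-1}{pr}\phi'(r)$, so the barrier construction reduces to choosing, e.g., $\phi(r)=A(R-r)$ plus a small correction, or $\phi(r)=c_1(r^{-\alpha}-R^{-\alpha})$ with $\alpha=\tfrac{n-p}{p}>0$ when $n>p$ (and a logarithmic/linear variant otherwise), scaled so that $\phi\le m$ at $r=R/2$ and $-\Delta_p^N w\le 1$; this is a routine computation since the operator is uniformly elliptic in the radial variable once $\phi'\ne 0$. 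Comparison (Proposition~\ref{comp}) on $A$, where $u\ge 0=w$ on $\partial B_R(z)$ and $u\ge m\ge w$ on $\partial B_{R/2}(z)$, yields $u\ge w$ in $A$; hence
\[
\limsup_{t\to 0^{+}}\frac{u(y)-u(y-t\nu(y))}{t}\le \limsup_{t\to 0^{+}}\frac{w(y)-w(y-t\nu(y))}{t}=\phi'(R)<0,
\]
and $a:=-\phi'(R)>0$ can be taken independent of $y$ because $R$, $m$, and the resulting ODE are uniform (here $m$ is bounded below uniformly using the already-established interior positivity together with, if needed, a single global interior barrier or compactness of $\overline\Omega$).

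The main obstacle I anticipate is twofold. First, one must verify the barrier inequality in the \emph{viscosity} sense at points where $\nabla w=0$ or where $w$ is merely $C^2$ on the annulus — but since $w$ is radial and smooth with $\phi'\ne 0$ on $[R/2,R]$, $\nabla w$ never vanishes there, so $F_p$ is continuous and classical computations suffice; the only care needed is that $w$ be a legitimate subsolution up to the inner boundary. Second, and more genuinely delicate, is extracting a \emph{uniform} lower bound $m>0$ on $u$ over the family of inner spheres $\partial B_{R/2}(z)$ as $y$ ranges over $\partial\Omega$: this is where one uses that $\{B_{R/2}(z):y\in\partial\Omega\}$ stays in a fixed compact subset at distance $\ge R/2$ from $\partial\Omega$, on which $u$ is continuous and (by the interior positivity already shown plus, e.g., a fixed global subsolution barrier) bounded below by a positive constant. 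Everything else is the standard Serrin–Hopf barrier argument, transcribed into the viscosity framework using Proposition~\ref{comp}.
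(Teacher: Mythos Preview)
Your proposal is essentially correct but follows the classical Hopf--barrier route on an annulus, whereas the paper takes a shorter path. Instead of constructing a barrier on $B_R(z)\setminus\overline{B_{R/2}(z)}$, the authors simply write down the \emph{exact} radial solution of the Dirichlet problem $-\Delta_p^N v=1$, $v|_{\partial B_R}=0$ on the full interior ball, namely
\[
v(r)=\frac{p}{2(p+n-2)}\,(R^2-r^2),
\]
and compare $u$ with $v$ on $B_R(z)$ via Proposition~\ref{comp}. This yields positivity ($u\ge v>0$ inside every interior ball) and the boundary derivative bound simultaneously, with the explicit constant $a=-v'(R)=\tfrac{Rp}{p+n-2}$. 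In particular there is no need for the intermediate uniform lower bound $m=\min_{\partial B_{R/2}}u$, which in your argument requires an extra compactness step; the paper's comparison function already vanishes on the whole sphere $\partial B_R$, so $u\ge 0$ there is all that is needed. Your approach buys generality (it would work for right-hand sides other than the constant $1$, where no closed-form solution is available), while the paper's buys brevity and an explicit constant.

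Two minor corrections to your radial computation: for a radial function with $\phi'\ne 0$ one has $\Delta_1^N w=\tfrac{n-1}{r}\phi'(r)$ (not $\phi''+\tfrac{n-1}{r}\phi'$, which is the full Laplacian), and consequently $\Delta_p^N w=\tfrac{p-1}{p}\phi''+\tfrac{n-1}{pr}\phi'$ rather than $\phi''+\tfrac{n-1}{pr}\phi'$. This does not affect the structure of your argument, only the specific choice of barrier exponent.
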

Here $\nu(y)$ denotes the outward  unit normal at $y$. 
In fact, one can compare $u$ to a radially symmetric and radially decreasing solution $v$ on the interior of the sphere. On a ball solutions $v$ of the Dirichlet problem for $F_p=0$ are unique by Proposition \ref{comp}, so they are  necessarily radial. In polar coordinates $F_p=0$ turns into the tractable ODE \cite{KKK}
$$-\tfrac{p-1}p v_{rr}-\tfrac{n-1}{pr}v_r=1 \quad \text{in}\ (0,R)$$ with $v_r(0)=0=v(R)$ as boundary conditions, and this boundary value problem has the explicit solution $$v(r)=\frac{p}{2(p+n-2)}(R^2-r^2),$$ so that $u$ is positive in every ball with radius $R$ contained in $\Omega$. Moreover, Lemma \ref{Hopf} holds with $a=\tfrac{Rp}{p+n-2}$.\hfill\qed

\section{Proof of Main Result}
 To prove Theorem \ref{Main} we follow an idea developed in \cite{AG}.  {We first note that from the regularity result stated in  Theorem \ref{imp2} in the Appendix, we have that $u$ is  in $C^{1, \beta} (\overline{\Om})$ for some $\beta=\beta(n, p, \Om)$ and therefore the Neumann condition is realized  in the classical pointwise sense.  Now  because by assumption $|\nabla u|=-c>0$ on $\partial\Omega$ and $u\in C^{1, \beta} (\overline\Omega)$, we have that  $|\nabla u|>0$ in an $\varepsilon$-neighborhood $S_\varepsilon$ of $\partial\Omega$ inside $\Omega$ defined by $S_\varepsilon:=\{ x\in\overline\Omega\ ; \ d(x,\partial\Omega)<\varepsilon\}$. Therefore  the operator $F_p$ is well-defined in the classical sense in $S_\varepsilon$}.  {Moreover in $S_\varepsilon$, since  $|\nabla u| > 0$, we have that  $u$ solves} 
 \[
 \Sigma_{i, j=1}^n a_{ij}(x) u_{x_i x_j}= -1 
 \]
 where
 \[
 a_{ij}(x)= \frac{1}{p} (\delta_{ij} + (p-2)\frac{u_{x_i} u_{x_j}}{|\nabla u|^2})
 \]
{is uniformly  elliptic and is in $C^{\beta}(S_\varepsilon)$.  Consequently,  by the classical Schauder theory  we can assert that   $u$ is of class $C^{2, \beta}_{loc}$ in $S_\varepsilon$.   We now move a  hyperplane}, say $T_\lambda:=\{x\in{\mathbb R}^n\ |\ x_1=\lambda\}$ from the left by the amount $\varepsilon/2$ in $x_1$-direction into $\Omega$ and compare the original solution $u(x)$ to the reflected one $v(x)=u(x-2\lambda e_1)$ in the reflected cap. By the weak comparison principle, Proposition \ref{comp}, we know that $u\geq v$ in the reflected cap $\Sigma_\lambda'$.  Moreover,  since $|\nabla u|, |\nabla v|>0$ in $S_\varepsilon$, we have that $u,v$ solve an  equation in $S_\varepsilon$ of the form 
\begin{equation}\label{e10}
\tilde{F}(\nabla h, \nabla^2 h)=0
\end{equation}
where $\tilde F$ is uniformly elliptic and smooth in its arguments.  Therefore $w=u-v$ solves the following equation linearized equation in $S_\varepsilon$,
\begin{equation}\label{e11}
\Sigma_{i,j=1}^n c_{ij} w_{x_i x_j} + \langle b, \nabla w\rangle =0,
\end{equation}
where 
\[
c_{ij}= \int_{0}^{1} \frac{ \partial \tilde F}{\partial m_{ij}} ( t\nabla u + (1-t) \nabla v, t \nabla^2 u + (1-t) \nabla^2 v) dt
\]
and
\[
b_i= \int_{0}^{1} \frac{\partial \tilde F}{\partial p_i} ( t \nabla u + (1-t) \nabla v, t \nabla^2 u +(1- t) \nabla^2 v) dt .
\]
Moreover $[c_{ij}]$ is  uniformly elliptic and the first order coefficients $b_{i}$ are bounded  in $S_\varepsilon$.  Note that over here, we think of $\tilde F: \mathbb{R}^{n} \times \mathbb{R}^{n^2} \to \mathbb{R}$ as  a function of the matrix $[m_{ij}] \in \mathbb{R}^{n^2}$ and the vector $p=(p_1, ....., p_n) \in \Rn$. Therefore,  $\frac{ \partial \tilde F}{\partial m_{ij}}$ is to be thought of as the partial derivative of $ \tilde F$ with  respect to the coordinate $m_{ij}$ in $\mathbb{R}^{n^2}$ and $\frac{\partial \tilde  F}{\partial p_i}$ is  the partial derivative  with respect to the coordinate $p_i$ in $\Rn$. 

\medskip

Since $w$ solves the uniformly elliptic PDE \eqref{e11} in $\Sigma_\lambda'\cap S_\varepsilon$,   by  the classical strong maximum principle applied to  $w$, we get that $u>v$ in $\Sigma_\lambda'\cap S_\varepsilon$ and $w_{x_1}>0$ on the plane $T_\lambda\cap S_{\varepsilon/2}$. The latter inequality follows from the classical Hopf Lemma applied to $w$.
We continue to move the hyperplane across. Even if the operator might become degenerate because we pass a critical point of $u$, the weak comparison principle continues to hold, so that $u\geq v$ in the reflected cap, until one of the following cases occurs.

$\quad$ i) The hyperplane and $\partial\Omega$ meet under a right angle in a point $P$.

$\quad$ ii) The reflected cap touches $\partial\Omega$ from the inside of $\Omega$ in a point $Q$.

In case i) we can apply the strong maximum principle again and conclude that either $u>v$ in the reflected cap intersected with an $\varepsilon/2$ neighborhood of the point $P$, or $u\equiv v$ there. But by Serrin's corner lemma applied to $w=u-v$ which solves \eqref{e11},  the first case $w>0$ is ruled out. In fact $\nabla u(P)=\nabla v(P)$ because the normal derivatives coincide there and the tangential ones vanish.  So a partial derivative of $w$ in any direction $\eta$ must vanish there. Let us see what happens to second partial derivatives in direction $\eta=\alpha \nu +\beta \tau$, where $\tau$ is a unit vector tangent to $\partial\Omega$ at $P$. We claim that
\begin{equation}\label{ident1}
u_{\eta\eta}(P)=\alpha^2 u_{\nu\nu}(P)+\alpha\beta u_{\nu\tau}(P)+\alpha\beta u_{\tau\nu}(P)+\beta^2u_{\tau\tau}(P)=v_{\eta\eta}(P).
\end{equation}
In fact in $P$ one can rewrite the differential equations for $u$ and $v$ as (see \cite{K1})
\begin{equation}
\tfrac{p-1}{p}u_{\nu\nu}(P)+\tfrac{n-1}{p}H(P)u_\nu(P)=-1=\tfrac{p-1}{p}v_{\nu\nu}(P)+\tfrac{n-1}{p}H(P)v_\nu(P)
\end{equation}
where $H$ is the mean curvature of the $C^2$ boundary. Since $u_\nu(P)=c=v_\nu(P)$ we conclude that $u_{\nu\nu}(P)=v_{\nu\nu}(P)$. For the same reason $(u_\nu)_\tau=0=(v_\nu)_\tau$. Now $(u_\tau)_\nu={u_\nu}_\tau-\kappa u_\nu$, where $\kappa$ denotes the curvature of $\partial\Omega$ in direction $\tau$, so that   $(u_\tau)_\nu(P)=(v_\tau)_\nu(P)$. Finally one can observe that $u_{\tau\tau}(P)=u_{ss}(P)+\kappa(P)u_\nu(P)$, where $s$ denotes arclength along the curve that is cut out of $\partial\Omega$  by the plane spanned by $\tau$ and $\nu$. Since $u$ and $v$ are constant on $\partial\Omega$, we have $u_{ss}(P)=v_{ss}(P)$, and since $\nabla u(P)=\nabla v(P)$, this completes the proof of (\ref{ident1}). Therefore we  can conclude that $w_\eta(P)=w_{\eta\eta}(P)=0$ and at this point,  Serrin's corner lemma implies that $w\equiv 0$ in $\Sigma_\lambda'\cap S_\varepsilon$.

\smallskip
In case ii) we can also conclude that either $u>v$ in the reflected cap intersected with an $\varepsilon/2$ neighborhood of the point $Q$ or $u\equiv v$. Now in former case, i.e. when $u>v$ in the reflected  cap, we recall again that $w$ solves the uniformly elliptic PDE \eqref{e11} in $S_\varepsilon$.But then by the  Hopf's Lemma  applied to $w$, we get that $\partial w/\partial\nu<0$ at $Q$, which contradicts the fact that $u$ satisfies constant Neumann data.  Consequently, we have that  $w\equiv 0$ in $\Sigma_\lambda'\cap S_\varepsilon$.

In both cases $\partial\Omega$ and $u$ are locally Steiner-symmetric in direction $x_1$. To see that they are also globally symmetric, one can argue as follows. For reasons of continuity the set of points in which the reflected boundary coincides with the original boundary is closed in $\partial\Omega$. But it is also open in $\partial \Omega$. In fact in any point that belongs to the boundary of this intersection one can apply the corner lemma again to see that a whole neighborhood still belongs to it.

Since this Steiner symmetry happens in any direction, we can conclude that  $\Omega$ is a ball and $u$  is radial and radially decreasing.   \hfill\qed

\section{Appendix}
In this section, we state and prove  a basic regularity result which has been referred to  in the proof of Theorem \ref{Main} in the previous section.   In order to do so, we  first introduce the relevant notion of extremal \emph{Pucci} type operators. 
Let $\Mp$ and $\Mm$ denote the maximal and minimal \emph{Pucci operators} corresponding to $\lambda,  \Lambda$, i.e., for every $M\in \mathcal S_n$ we have
\[
\Mp(M) = \Mp(M,\lambda,\Lambda) = \La \sum_{e_i>0} e_i + \la \sum_{e_i<0} e_i,
\]
\[
\Mm(M) = \Mm(M,\lambda,\Lambda) = \la \sum_{e_i>0} e_i + \La \sum_{e_i<0} e_i,
\]
where $e_i = e_i(M)$ indicate the eigenvalues of $M$. Hereafter, the dependence of $\Mp$ and $\Mm$ on $\lambda, \Lambda$  will be suppressed. As is well known, $\Mp$ and $\Mm$ are uniformly elliptic fully nonlinear operators. 

\medskip

We now state the following important lemma which connects the normalized $p$ Laplacian operator $\Delta_p^N$ to  appropriate  maximal and minimal \emph{Pucci operators}. 

\begin{lemma}\label{imp1}\
Let $u\in C(\Omega)$ be a viscosity solution to
\begin{equation}\label{E0}
\Delta_p^N u= f\ \text{in $\Omega$}
\end{equation}
and let $f$ be bounded. Then $u$ satisfies the following differential  inequalities in the viscosity sense
\begin{equation}\label{maine}
\Mp(\nabla^2 u) + K \geq 0 \geq \Mm(\nabla^2 u) -  K
\end{equation}
where $\Mp, \Mm$ are the pair of extremal \emph{Pucci operators} corresponding to $\la= \min\{\tfrac{1}{p},\tfrac{p-1}{p}\}$ and $\La= \max\{\tfrac{1}{p},\tfrac{p-1}{p}\}$ and $K= ||f||_{L^{\infty}(\Om)}$. 
\end{lemma}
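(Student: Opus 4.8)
The plan is to unwind Definition~\ref{vs} and reduce the claim to a single elementary fact about symmetric matrices. On the set $\{q\neq0\}$ the operator $\Delta_p^N$ has symbol $\mathcal L(q,X):=\mathrm{tr}\bigl(A(q)X\bigr)$, where $A(q)_{ij}=\tfrac1p\bigl(\de_{ij}+(p-2)\,q_iq_j/|q|^2\bigr)$; the eigenvalues of $A(q)$ are $\tfrac1p$ (with multiplicity $n-1$, on $q^{\perp}$) and $\tfrac{p-1}p$ (in the direction $q$), so $\la I\le A(q)\le\La I$ for the $\la,\La$ in the statement. The elementary fact is: if a symmetric matrix $M$ satisfies $\la I\le M\le\La I$, then for every symmetric $X$,
\[
\Mm(X)\ \le\ \mathrm{tr}(MX)\ \le\ \Mp(X);
\]
writing $X=\sum_i e_i\,\xi_i\otimes\xi_i$ in an orthonormal eigenbasis, $\mathrm{tr}(MX)=\sum_i e_i\langle M\xi_i,\xi_i\rangle$ with each $\langle M\xi_i,\xi_i\rangle\in[\la,\La]$, and estimating coefficient by coefficient gives both inequalities. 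In particular $\Mm(X)\le\mathcal L(q,X)\le\Mp(X)$ for all $q\neq0$, and passing to the lower/upper semicontinuous envelopes in $q$ at $q=0$ (so that $\mathcal L_*(0,X)=\inf_{a\neq0}\mathrm{tr}(A(a)X)$ and $\mathcal L^*(0,X)=\sup_{a\neq0}\mathrm{tr}(A(a)X)$) we still get $\Mm(X)\le\mathcal L_*(0,X)$ and $\mathcal L^*(0,X)\le\Mp(X)$; the same bounds are also visible from the explicit expressions for $F_*(0,\cdot)$ and $F^*(0,\cdot)$ displayed in Section~2.

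Recall that $\Mp(\nabla^2u)+K\ge0$ in the viscosity sense means: for every $\varphi\in C^2$ such that $\varphi-u$ has a local minimum at some $x_0$, one has $\Mp(\nabla^2\varphi(x_0))+K\ge0$; dually, $\Mm(\nabla^2u)-K\le0$ is tested against $\psi\in C^2$ for which $\psi-u$ has a local maximum. To prove the first, note that $u$ being a viscosity subsolution of $\Delta_p^Nu=f$ means it is a subsolution of $F=0$ with $F(q,X)=f-\mathcal L(q,X)$ (degenerate elliptic), so Definition~\ref{vs} applied to $\varphi$ gives $F_*(\nabla\varphi(x_0),\nabla^2\varphi(x_0))\le0$; since $|f|\le K$ this forces $\mathcal L^*(\nabla\varphi(x_0),\nabla^2\varphi(x_0))\ge-K$ (with $\mathcal L^*=\mathcal L$ when $\nabla\varphi(x_0)\neq0$). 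The sandwich of the previous paragraph then gives $\Mp(\nabla^2\varphi(x_0))\ge\mathcal L^*(\nabla\varphi(x_0),\nabla^2\varphi(x_0))\ge-K$ in both cases $\nabla\varphi(x_0)\neq0$ and $\nabla\varphi(x_0)=0$, which is the first inequality. The second is obtained identically: for $\psi$ as above, the viscosity supersolution property gives $F^*(\nabla\psi(x_0),\nabla^2\psi(x_0))\ge0$, hence $\mathcal L_*(\nabla\psi(x_0),\nabla^2\psi(x_0))\le K$, and the lower bound $\Mm(\nabla^2\psi(x_0))\le\mathcal L_*(\nabla\psi(x_0),\nabla^2\psi(x_0))$ yields $\Mm(\nabla^2\psi(x_0))-K\le0$.

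I do not expect a real obstacle here: the argument is essentially bookkeeping of the viscosity definitions together with the one spectral inequality above. The single point that needs care is the behaviour at test points with $\nabla\varphi(x_0)=0$, where $\Delta_p^N$ is genuinely degenerate, so that one must argue via the envelopes $\mathcal L^*(0,\cdot)$ and $\mathcal L_*(0,\cdot)$ rather than via $\mathrm{tr}(A(\nabla\varphi)\,\cdot)$ directly — but these envelopes are trapped between $\Mm$ and $\Mp$ by exactly the same estimate, so nothing changes. One must also keep the orientation of Definition~\ref{vs} straight, so that the $\Mp$-inequality is tested against $C^2$ functions lying above $u$ and the $\Mm$-inequality against functions lying below $u$.
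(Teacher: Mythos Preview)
Your argument is correct and is exactly the approach the paper takes: the paper's proof consists of the single sentence that the coefficient matrix of $\Delta_p^N$ is bounded between $\min\{\tfrac1p,\tfrac{p-1}p\}I$ and $\max\{\tfrac1p,\tfrac{p-1}p\}I$, and you have simply unpacked this into the spectral inequality $\Mm(X)\le\mathrm{tr}(A(q)X)\le\Mp(X)$ together with the viscosity bookkeeping (including the envelope step at $q=0$) that the paper leaves to the reader.
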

\begin{proof}
The proof is a straightforward consequence of the fact that the    coefficient matrix corresponding to $\Delta_p^N$ is bounded  from below by $\min\{\tfrac{1}{p},\tfrac{p-1}{p}\}I$ and from above by $\max\{\tfrac{1}{p},\tfrac{p-1}{p}\}I$.
\end{proof}
We now state our main result  in this section concerning  the regularity of viscosity solutions to the equation in Theorem \ref{Main} up to the boundary.
\begin{theorem}\label{imp2}
Let $\Omega$ be a $C^{2}$ domain and  $x_0 \in \partial \Omega$. Let  $u$ be a viscosity solution to
\begin{equation}\label{e0}
\begin{cases}
-\Delta_p^N u= f\ \text{in}\ \Omega \cap B_{2r}(x_0)
\\
u=g\ \text{on}\  \partial \Omega \cap B_{2r}(x_0)
\end{cases}
\end{equation}
where $f \in C(\overline{B_{2r} (x_0) \cap \Omega})$ and $g \in  C^{1, \alpha}( \partial \Omega \cap B_{2r}(x_0))$. Then $u \in C^{1, \beta}(\overline{\Omega \cap B_{r}(x_0)})$ where $\beta=\beta(\alpha, \Omega, f, g)$. 
\end{theorem}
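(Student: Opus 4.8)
The plan is to remove the $p$--dependence and the nonlinearity at the very first step and then run the classical regularity machinery for the Pucci class. By Lemma~\ref{imp1}, the viscosity solution $u$ of $-\Delta_p^N u=f$ in $\Om\cap B_{2r}(x_0)$ satisfies, in the viscosity sense,
\[
\Mp(\na^2 u)+K\geq 0\geq \Mm(\na^2 u)-K,\qquad K=\|f\|_{L^\infty(\Om\cap B_{2r}(x_0))},
\]
with $\la=\min\{\tfrac1p,\tfrac{p-1}p\}$ and $\La=\max\{\tfrac1p,\tfrac{p-1}p\}$. This is the only place where the structure of $\Delta_p^N$ enters; from here on everything is a statement about membership in the Pucci class together with the smoothness of $\partial\Om$ and $g$. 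In particular the interior estimate $u\in C^{1,\al_0}_{loc}$, with $\al_0=\al_0(n,\la,\La)$ and a bound in terms of $\|u\|_{L^\infty}$ and $K$, is immediate from Caffarelli's theory for the extremal operators (Krylov--Safonov Harnack together with the approximation lemma comparing $u$ with solutions of the constant--coefficient equation $\Mp(\na^2 v)=0$).

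\textbf{Reduction near the boundary.} Fix $x_0\in\partial\Om$. First I would normalise the data: extend $g$ to a $C^{1,\al}$ function in a full neighbourhood of $x_0$ and subtract the affine function matching this extension and its gradient at $x_0$; since affine functions do not affect the Hessian, the extremal inequalities are untouched, and one may assume $g(x_0)=0$ with $\na g$ vanishing tangentially at $x_0$, hence $|g(x)|\le C|x-x_0|^{1+\al}$ on $\partial\Om$ near $x_0$. Then flatten $\partial\Om\cap B_{2r}(x_0)$ by a $C^2$ diffeomorphism $\Phi$ carrying it to a piece of hyperplane. Using the chain rule for the Hessian under $\Phi$ — which produces a congruence $(D\Phi)^{\top}(\na^2\cdot)(D\Phi)$ plus terms linear in the gradient — together with the elementary fact that $\Mp$ and $\Mm$ of a congruence $A^{\top}MA$ are controlled by extremal operators of $M$ with modified ellipticity constants, the transformed function $\tilde u=u\circ\Phi^{-1}$ satisfies on a half--ball $B_\rho^{+}$ inequalities of the form $\Mp(\na^2\tilde u)+C(1+|\na\tilde u|)\ge 0\ge\Mm(\na^2\tilde u)-C(1+|\na\tilde u|)$, with ellipticity constants and $C$ depending only on $n$, $p$ and $\|\partial\Om\|_{C^2}$.

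\textbf{Lipschitz bound, then $C^{1,\beta}$ at the boundary.} To control the first--order term, and ultimately to obtain a gradient estimate, I would first prove a Lipschitz bound by barriers, working directly on $\Om$ before flattening. The function $w_{\pm}=\pm(Ad-Bd^{2})$ with $d=\mathrm{dist}(\cdot,\partial\Om)$ is, for $A,B$ chosen large depending on $K$, $n$, $p$ and the curvature of $\partial\Om$, a super/subsolution of the extremal inequalities in a neighbourhood of $\partial\Om$; comparing with $w_{\pm}$ shifted by $C|x-x_0|^{1+\al}$ via Proposition~\ref{comp} gives $|u(x)|\le C|x-x_0|$ near $x_0$, and together with the interior estimate a global Lipschitz bound. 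Since any $C^2$ test function touching a Lipschitz $u$ has gradient bounded by $\mathrm{Lip}(u)$, the term $C(1+|\na\tilde u|)$ above is now bounded, so $\tilde u$ lies in the Pucci class with bounded right--hand side on $B_\rho^{+}$ and vanishes on the flat part $\{x_n=0\}$. At this point the boundary $C^{1,\beta}$ estimate follows from the boundary analogue of Caffarelli's improvement--of--flatness: comparing $\tilde u$ on dyadic half--balls with the solution of the constant--coefficient extremal equation that vanishes on $\{x_n=0\}$ — which is $C^{1,\gamma}$ up to that hyperplane with a universal bound — yields the decay $\inf_{a\in\R}\|\tilde u-a x_n\|_{L^\infty(B_\rho^{+})}\le C\rho^{1+\beta}$, and iteration over scales gives differentiability of $u$ at $x_0$ with a Hölder modulus, uniformly for $x_0\in\partial\Om\cap B_r(x_0)$. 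Interpolating this with the interior $C^{1,\al_0}$ estimate through a standard covering argument produces $u\in C^{1,\beta}(\overline{\Om\cap B_r(x_0)})$ with $\beta=\beta(\al,\Om,f,g)$. Equivalently, once the reduction of the first step is in hand one may simply quote the global boundary regularity theorems for the Pucci class due to Caffarelli--Cabré and to Silvestre--Sirakov.

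\textbf{Main obstacle.} The only genuinely new ingredient is the reduction in the first step; everything afterwards is classical. Consequently I expect the difficulty to be organisational rather than a single hard estimate: the a priori bounds must be sequenced in the right order — barriers first, then the Lipschitz bound, then the Pucci--with--bounded--right--hand--side formulation, then the $C^{1,\beta}$ decay — because after the $C^2$ flattening the Hessian correction and the first--order term would otherwise block a direct appeal to the extremal--operator machinery.
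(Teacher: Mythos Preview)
Your overall strategy coincides with the paper's: reduce to the Pucci class via Lemma~\ref{imp1}, invoke Silvestre--Sirakov \cite{SS} for the pointwise $C^{1,\beta}$ estimate at boundary points, obtain interior $C^{1,\beta}$ separately, and then glue the two by a covering argument (the paper cites Proposition~2.4 of \cite{MS} for this last step). Your boundary discussion, though more detailed than necessary, is correct and in the end simply reproduces what \cite{SS} proves.

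The gap is in the interior step. You claim that $u\in C^{1,\alpha_0}_{loc}$ is ``immediate from Caffarelli's theory for the extremal operators'' once $u$ satisfies the two-sided Pucci inequalities $\Mp(\nabla^2u)+K\ge 0\ge\Mm(\nabla^2u)-K$. This is not so: membership in the two-sided Pucci class yields only interior $C^\alpha$ regularity (Krylov--Safonov), not $C^{1,\alpha}$. Caffarelli's $C^{1,\alpha}$ theorem requires $u$ to solve an actual equation $F(D^2u)=f$, because the argument passes to difference quotients of $u$, and these lie in the Pucci class only when there is an equation to subtract; the pair of one-sided inequalities carries no information about increments of $u$. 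Your approximation-by-$\Mp$-harmonic-functions variant fails for the same reason: blow-ups of a function in the two-sided Pucci class are again only in that class, not solutions of any fixed equation. The paper closes this gap by invoking \cite{APR}, which establishes interior $C^{1,\alpha}$ regularity specifically for viscosity solutions of $-\Delta_p^N u=f$, using the structure of the equation itself (including the singularity of the gradient dependence at $\nabla u=0$); this is a separate, nontrivial ingredient, not a corollary of the Pucci envelope. With \cite{APR} substituted for your interior claim, the rest of your outline is correct and matches the paper.
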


\begin{proof}
We first note that by Lemma \ref{imp1},  $u$ satisfies  the differential inequalities \eqref{maine} in the viscosity  sense. Therefore we may apply Theorem 1.1 in \cite{SS} and can assert  that  for some $\beta= \beta(\alpha, n, p)$,  there exists $G \in C^{\beta_1}(\overline{\partial \Omega \cap B_{r}(x_0)})$ which is the \lq\lq gradient" of $u$ at the boundary  such that
\begin{equation}\label{e4}
| u(x) -u(x_1) - \langle G(x_1), x- x_1 \rangle | \leq C|x-x_1|^{1+ \beta_1}\ \text{for all $x \in \Omega \cap B_{r}(x_0)$}.
\end{equation}
Here $C$ depends also on the $C^{2}$ character  of the domain $\Om$. 
Therefore, \eqref{e4} expresses the fact  that  $u$ is $C^{1, \beta}$ at the boundary. Now the fact that  $u  \in C^{1, \beta_2}_{loc}(\Om \cap B_{2r}(x_0))$ for some $\beta_2$ depending on $n, p$  follows from the interior  regularity  result established  in  the recent paper \cite{APR}.  At this point,  by taking $\beta= \text{min}(\beta_1, \beta_2)$,   we can argue as in the proof of Proposition 2.4 in \cite{MS} to conclude that $u \in C^{1, \beta}(\overline{\Omega \cap B_{r}(x_0)})$.
\end{proof}

\bigskip
{\bf Acknowledgements:}
This research was essentially done during a visit of the second author to TIFR CAM Bangalore. B.K. thanks the Institute  and in particular Agnid Banerjee for their hospitality and support. Both authors thank the anonymous referee for carefully reading the manuscript and for his/her helpful comments and suggestions.

\medskip
\noindent
Author's addresses:

\medskip\noindent
Bernd Kawohl,
Mathematisches Institut,
Universit\"at zu K\"oln,
D-50923 K\"oln,
Germany,
kawohl@mi.uni-koeln.de

\medskip\noindent
Agnid Banerjee, TIFR CAM, Bangalore -560065, India, 
agnidban@gmail.com
\end{document}